\documentclass[11pt]{amsart}

\usepackage[T1]{fontenc}
\usepackage[utf8]{inputenc}
\usepackage[english]{babel}

\usepackage{pifont,geometry}
\usepackage{amsmath,amssymb,amsthm}

\usepackage{algorithm,algpseudocode}
\usepackage{graphicx}
\usepackage{enumitem}

\usepackage[x11names,rgb,table]{xcolor}
\usepackage{tikz}
\usetikzlibrary{arrows,shapes}

\usepackage[pdftex,bookmarks,colorlinks]{hyperref}

\newtheorem{theorem}{Theorem}[section]
\newtheorem{corollary}[theorem]{Corollary}
\newtheorem{proposition}[theorem]{Proposition}
\newtheorem{lemma}[theorem]{Lemma}

\theoremstyle{definition}
\newtheorem{definition}[theorem]{Definition}

\numberwithin{equation}{section}

\newcommand{\NN}{\mathbb N}
\newcommand{\ZZ}{\mathbb Z}


\newcommand{\round}[1]{\ensuremath{\lfloor#1\rceil}}

 
 
 \begin{document} 	
 	\title{On the decoding of 1-Fibonacci error correcting codes}
 	
 	\author{Emanuele Bellini}
 	\address{\textnormal{Emanuele Bellini}. Technology Innovation Institute, Abu Dhabi, UAE}
 	\email{{eemanuele.bellini@gmail.com}}
 	
 	\author{Chiara Marcolla}
 	\address{\textnormal{Chiara Marcolla}. Technology Innovation Institute, Abu Dhabi, UAE}
 	\email{{chiara.marcolla@gmail.com}}
 	
 	\author{Nadir Murru}
 	\address{\textnormal{Nadir Murru}. University of Torino, Department of Mathematics, Torino, Italy}
 	\email{{nadir.murru@unito.it}}

 	\keywords{Fibonacci numbers, Diophantine equations, error correcting codes, Golden Ratio, recurrent sequence}
 	
 	
\begin{abstract}
		The study of new error correcting codes has raised attention in the last years, 
		especially because of their use in cryptosystems 
		that are resistant to attacks running on quantum computers.
		In 2006, while leaving a more in-depth analysis for future research, 
		Stakhov gave some interesting ideas on how to exploit Fibonacci numbers to derive an original error correcting code with a compact representation.
		In this work 
		we provide an explicit formula to compute the redundancy of Stakhov codes, 
		we identify some flows in the initial decoding procedure described by Stakhov, 
		whose crucial point is to solve some non-trivial Diophantine equations,  
		and provide a detailed discussion on how to avoid solving such equations
		in some cases and on how to detect and correct errors more efficiently.
\end{abstract}

\maketitle

\section{Introduction} \label{sec:intro}

Coding theory has been playing an important role in the field of communication for many years.
Many kind of codes for detecting and correcting possible errors occurred in the transmission of a message have been developed. 
Moreover, the application of error correcting codes to the development of cryptographic systems resistant to quantum attacks has further increased their interest and study.
The main drawback of using error correcting codes in cryptography, 
is that they yield very large keys, signatures or encryptions.
For these reasons the study of new codes is a very active research field, 
and for the case of cryptography, 
the problem of finding a code with a compact representation is still open. 

In 2005, 
Stakhov \cite{Stakhov} introduced the so called $p$-Fibonacci error correcting codes, 
whose decoding exploits the properties of the Fibonacci numbers, their generalizations, golden ratio approximation and Diophantine equations. 
Stakhov claims that the new error correction method has a correction ability of $93.33\%$
and  that such codes have a redundancy of $33.3\%$. 
Such claims raise some suspicions
from an information theoretic point of view. 
Thus, 
a deeper and more precise analysis of Stakhov's results is necessary, both concerning the redundancy and the ability of detecting and correcting errors. 
Indeed, in Section~\ref{sec:errors}, 
we are going to explain that 
some methodologies and conclusions 
presented by Stakhov are misleading and incorrect.
From a practical point of view, it is hard to
think of an actual channel in which 
Stakhov codes would be convenient. 
Yet, the use of these codes might result useful in the context of code-based cryptography, 
where there are no restriction on the channel, and
where a compact representation of the public key, 
which is often given by the generator matrix, 
is paramount.

In this paper, we partially address the issues mentioned above 
in the case of 1-Fibonacci codes.
We first show how to improve the error detection phase.
We then show how to correct some type of errors 
without the need of solving potentially complex Diophantine equations.
We also show how to restrict the message space in order not to have
ambiguities in the decoding.
And finally, we provide an explicit formula for the redundancy of these codes.

The paper is structured as follows.
In Section \ref{sec:related}, we give a brief overview about works that looked into the use of $p$-Fibonacci numbers in coding theory. 
In Section~\ref{sec:fibo-code}, 
we fix also the notation and 
recall the idea of the $p$-Fibonacci encoding method, 
while the decoding is presented in Section \ref{sec:fibo-decoding}.
Section \ref{sec:errors} and Section \ref{sec:redundancy} are devoted to our contributions.
To conclude, in Section \ref{sec:conc} we summarize our results and point to future research goals.

\section{Related works} \label{sec:related}

The Fibonacci numbers $(F_n)_{n=0}^{+\infty}$ are one of the most famous linear recurrent sequences, defined as
\[\begin{cases} F_1 = 1, \quad F_2 = 1 \cr F_{n+1} = F_n + F_{n-1}, \quad \forall n > 2   \end{cases}.\]
This linear recurrent sequence of order 2 can be generalized to higher orders as follows:
\begin{equation} \label{eq:pfibo}\begin{cases} F_1^{(p)} = \ldots = F_{p+1}^{(p)} = 1 \cr F_n^{(p)} = F_{n-1}^{(p)} + F_{n-p-1}^{(p)}, \quad \forall n>p+1 \end{cases},\end{equation}
which is a linear recurrent sequence (whose elements can be called $p$-Fibonacci numbers) of order $p+1$ with characteristic polynomial $x^{p+1} - x^p -1$. 

The $p$-Fibonacci coding/decoding method introduced by Stakhov \cite{Stakhov} has been studied in further works. In \cite{StakhovBook}, Stakhov gives some more details about the $p$-Fibonacci code, providing in particular an estimation of the redundancy. However, in this evaluation he does not take into account the increase of the dimensions of the messages due to the matrix multiplication involved in the encoding method, but he only considers the transmission of the determinant of a matrix used for representing the message that must be sent. 

While Stakhov mainly focused on the case $p=1$ corresponding to the classical Fibonacci numbers, in \cite{Esma} the authors proposed an analysis about the computational complexity of the coding/decoding method in the general case, proposing also some numerical examples. The authors show that in the worst case error correction can be done in time $O(2^{p^2})$, but they specify that the worst case is not a serious problem since it does not occur in practice, considering the channel characteristics.
The authors showed also that the code has a rate of error correction of $\frac{2^{p^2}-1}{2^{p^2}}$. However, in the whole discussion, the authors do not address the problem of the solution of some Diophantine equations involved in the step of the errors correction, which can be very time consuming if some adjustments are not taken, as we will see in the next sections. 

In \cite{Basu}, the authors give another (similar) estimation for the rate of error correction, that is $\frac{2^{p^2}-2}{2^{p^2}-1}$, and they propose a specific discussion on the case $p=2$. Also in this paper, the authors take into considerations only the matricial operations involved in the error correction phase, without discussing the use of the Diophantine equations. 

The $p$-Fibonacci code can be also generalized considering some linear recurrent sequences, like the $p$-Lucas numbers \cite{Pra3} or other ones that generalize the $p$-Fibonacci numbers, as in \cite{Basu2} and \cite{Pra2}. Prasad \cite{Pra} proposed also the use of recurrent sequences of polynomials $(F_n^{(p)}(x,y))$.
Further generalizations can be found in  \cite{Basu3}, \cite{Basu4}, \cite{Esma2}, \cite{Tas}.

\section{Preliminaries on the Fibonacci encoding method} \label{sec:fibo-code}

In this section we present the coding method proposed by Stakhov \cite{Stakhov}. For the seek of simplicity, in the following we focus on the case $p=1$, i.e., we focus on the classical Fibonacci numbers. 
It is fairly easy to generalize the results for larger $p$.
The Fibonacci sequence $(F_n)$ has many beautiful and interesting properties. Here, we only recall the properties useful for constructing the coding method:

\begin{itemize}
	
	\item Given $Q = \begin{pmatrix}  1 & 1 \cr 1 & 0 \end{pmatrix}$, we have $Q^n = \begin{pmatrix} F_{n+1} & F_n \cr F_n & F_{n-1} \end{pmatrix}$, for all $n > 1$.
	
	\item $\det Q^n = (-1)^n$ that is the Cassini identiy $F_{n+1}F_{n-1} - F_n^2 = (-1)^n$ for all $n > 1$.
	
	\item $Q^{-2k} = \begin{pmatrix} F_{2k-1} & -F_{2k} \cr -F_{2k} & F_{2k+1} \end{pmatrix}$ for all $k > 1$, $Q^{-(2k+1)} = \begin{pmatrix} -F_{2k} & F_{2k+1} \cr F_{2k+1} & -F_{2k+2} \end{pmatrix}$ for all $k > 0$.
	
	\item $\lim_{n \rightarrow +\infty} \cfrac{F_{n+1}}{F_n} = \varphi$, where $\varphi$ is the golden mean, i.e., the root greatest in modulo of $x^2 - x - 1$.
	
\end{itemize}
Given a message $m$ as an integer number, we represent it by means of a $2 \times 2$ matrix
$$M = \begin{pmatrix} m_1 & m_2 \cr m_3 & m_4  \end{pmatrix}$$
whose elements are positive non--zero integers, i.e., $m$ is identified by the matrix $M$. The encoding procedure consists in the multiplication of the matrix $M$ by a coding matrix that is the matrix $Q^n$ for a fixed integer $n$:
\begin{equation} \label{eq:coding} C = M \cdot Q^n = \begin{pmatrix} c_1 & c_2 \cr c_3 & c_4  \end{pmatrix} \end{equation}
and consequently
{\small{
$$
c_1 = F_{n+1} m_1 + F_n m_2, \,
c_2 = F_n m_1 + F_{n-1} m_2, \,
c_3 = F_{n+1} m_3 + F_n m_4, \, 
c_4 = F_n m_3 + F_{n-1} m_4  
.$$}}
In the following, when we refer to messages and codewords, we refer to $2 \times 2$ matrices with integral entries.
Hence, in order to detect and correct possible errors, the codeword $C$ is sent together with $\det M$, 
which plays an important role in the error correction, as we will see in the next sections.

\section{The Fibonacci Decoding method}
\label{sec:fibo-decoding}

Given a codeword $C$, a receiver can decode $C$ to retrieve the original message $M$ by means of the following multiplication:
\begin{equation} \label{eq:decode} C \cdot Q^{-n} = M. \end{equation}
Now, we recall the consideration of Stakhov \cite{Stakhov} for detecting and correcting possible errors. 

\begin{proposition} \label{prop:no-err}
	Given a message $M$, if the codeword $C = M \cdot Q^n$ does not contain errors, then 
	\begin{enumerate}
		\item $\det C = (-1)^n \det M$;
		\item $\cfrac{c_1}{c_2} \approx \varphi$, $\cfrac{c_3}{c_4} \approx \varphi$;
		\item $c_1 = \cfrac{(-1)^n \det M + c_2 c_3}{c_4}$, $c_2 = \cfrac{(-1)^n \det M + c_1 c_4}{c_3}$,
		$c_3 = \cfrac{(-1)^n \det M + c_1 c_4}{c_2}$, \\ $c_4 = \cfrac{(-1)^n \det M + c_2 c_3}{c_1}$ are integer numbers.
	\end{enumerate}
	See \cite{Stakhov}.
\end{proposition}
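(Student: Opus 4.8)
The plan is to verify each of the three claims by direct computation, using the encoding relation $C = M \cdot Q^n$ together with the two basic facts about powers of $Q$ recalled in Section~\ref{sec:fibo-code}, namely $Q^n = \begin{pmatrix} F_{n+1} & F_n \cr F_n & F_{n-1}\end{pmatrix}$ and $\det Q^n = (-1)^n$. For claim~(1), since the determinant is multiplicative we get $\det C = \det M \cdot \det Q^n = (-1)^n \det M$ immediately; no further work is needed here.

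For claim~(2), I would start from the explicit expressions $c_1 = F_{n+1} m_1 + F_n m_2$ and $c_2 = F_n m_1 + F_{n-1} m_2$ displayed after \eqref{eq:coding}, and write
\[
\frac{c_1}{c_2} = \frac{F_{n+1} m_1 + F_n m_2}{F_n m_1 + F_{n-1} m_2} = \frac{\tfrac{F_{n+1}}{F_n} m_1 + m_2}{m_1 + \tfrac{F_{n-1}}{F_n} m_2}.
\]
Using $\lim_{n\to\infty} F_{n+1}/F_n = \varphi$ (and hence $F_{n-1}/F_n \to 1/\varphi = \varphi - 1$), the right-hand side tends to $\frac{\varphi m_1 + m_2}{m_1 + (\varphi-1)m_2}$; since $\varphi^2 = \varphi + 1$, one checks $\varphi\big(m_1 + (\varphi-1)m_2\big) = \varphi m_1 + (\varphi^2 - \varphi)m_2 = \varphi m_1 + m_2$, so the limit is exactly $\varphi$. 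The same computation with $m_3, m_4$ in place of $m_1, m_2$ gives $c_3/c_4 \approx \varphi$. This is the step where some care is needed about what ``$\approx$'' means: the approximation holds for $n$ large, and its quality depends on $m_1/m_2$ (resp.\ $m_3/m_4$), so I would state it as a limiting statement rather than a uniform bound. I expect this to be the main obstacle, only in the sense that it is the one claim that is not an exact identity.

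For claim~(3), I would use claim~(1) in the form $(-1)^n \det M = \det C = c_1 c_4 - c_2 c_3$, which rearranges to $c_1 c_4 = (-1)^n\det M + c_2 c_3$ and $c_2 c_3 = (-1)^n \det M + c_1 c_4$ — wait, that second one is the same relation; more precisely $c_1 c_4 - c_2 c_3 = (-1)^n\det M$ gives directly $c_1 = \frac{(-1)^n\det M + c_2 c_3}{c_4}$ and $c_4 = \frac{(-1)^n \det M + c_2 c_3}{c_1}$, while rewriting $c_2 c_3 - c_1 c_4 = -(-1)^n\det M$ is not what we want, so instead I simply solve $c_1 c_4 - c_2 c_3 = (-1)^n\det M$ for each of $c_2$ and $c_3$ as well, obtaining $c_2 = \frac{c_1 c_4 - (-1)^n\det M}{c_3}$ and $c_3 = \frac{c_1 c_4 - (-1)^n \det M}{c_2}$; one should double-check the sign conventions in the statement against this, since the displayed formulas for $c_2, c_3$ carry $+(-1)^n\det M$ and the Cassini-type identity forces the sign to match whichever of $c_1c_4$ or $c_2c_3$ is larger. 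That these quantities are integers is then clear because each $c_i$ is manifestly an integer (it equals the displayed integer combination of the $m_j$ and Fibonacci numbers), so the four fractions, being equal to $c_1, c_2, c_3, c_4$ respectively, are integers. I would close by remarking that the point of writing $c_i$ this way is that each entry is recoverable from the other three together with the transmitted datum $\det M$, which is exactly the redundancy exploited in the decoding.
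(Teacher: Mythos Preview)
The paper does not supply its own proof of this proposition; it merely cites Stakhov with ``See \cite{Stakhov}'' and moves on. So there is nothing in the paper to compare against, and your verification is appropriate and correct on all three points.

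Your handling of claim~(3) is in fact sharper than the paper's statement. From $\det C = c_1 c_4 - c_2 c_3 = (-1)^n \det M$ one indeed gets $c_1 = \dfrac{(-1)^n\det M + c_2 c_3}{c_4}$ and $c_4 = \dfrac{(-1)^n\det M + c_2 c_3}{c_1}$, but solving the same identity for $c_2$ or $c_3$ yields $c_2 = \dfrac{c_1 c_4 - (-1)^n\det M}{c_3}$ and $c_3 = \dfrac{c_1 c_4 - (-1)^n\det M}{c_2}$, with a minus sign rather than the plus sign printed in the proposition (and repeated in \eqref{eq:xi}). You are right to flag this: as written, the four formulas in item~(3) cannot all be simultaneously compatible with item~(1), since the pair for $c_2,c_3$ would force $\det C = (-1)^{n+1}\det M$. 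This is a typo in the statement, not a gap in your argument. The remainder of your proof---the determinant multiplicativity for~(1), the limiting computation for~(2), and the observation that each $c_i$ is manifestly an integer for the integrality part of~(3)---is all correct and is the natural way to verify the proposition.
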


The quality of the approximations, expressed in the point 2 of the above proposition, plays an important role in the error correction and can be explicitly determined, as we will see in the next section.\\
In the procedure proposed by Stakhov \cite{Stakhov}, a checking element, namely $\det M$, is sent to the receiver together with the codeword $C$. As in  \cite{Stakhov}, 
we suppose that the checking element arrives to the receiver without errors. Hence,
if $\det C \not= (-1)^n \det M$, then the codeword contains one or more errors, otherwise we conclude that the codeword does not contain errors and the message $M$ 
can be recovered by \eqref{eq:decode}. This means that this method is not able to recognize errors of the type $U \cdot C$ or $C \cdot U$ where $\det U = 1$.
In the case that $\det C \not= (-1)^n \det M$, the following quantities must be evaluated:
\begin{align} 
	\label{eq:xi} 
	x_1 & = \cfrac{(-1)^n \det M + c_2 c_3}{c_4},
	\quad
	x_2 = \cfrac{(-1)^n \det M + c_1 c_4}{c_3}, 
	\\
	x_3 & = \cfrac{(-1)^n \det M + c_1 c_4}{c_2},
	\quad
	x_4 = \cfrac{(-1)^n \det M + c_2 c_3}{c_1}
	\,.
\end{align}
If one and only one of the quantities $x_i$ is an integer, then the codeword $C$ contains one error, which is the element $c_i$ and it is corrected by $x_i$. If the codeword contains more than one error, then all the quantities $x_i$ are not integers. In this case, Stakhov \cite{Stakhov} recommends to solve some Diophantine equations and use point 2 in Proposition~\ref{prop:no-err} in order to correct the errors. 
In particular, he suggests to check first all the hypotheses of double errors yielding to solve a Diophantine equation of the kind
$$x c_4 - y c_3 = (-1)^n \det M$$
if, for instance, we are checking the hypothesis that $c_1$ and $c_2$ are wrong. Among the infinite solutions of the above Diophantine equation, the correct values of $c_1$ and $c_2$ are provided by the solutions that satisfy the approximation property in Proposition~\ref{prop:no-err}. Stakhov \cite{Stakhov} stated that with a similar procedure all the cases of two and three errors in $C$ can be corrected. The method does not work only when all the $c_i$'s are wrong. 

We would like to point out that in \cite{Stakhov}, as the same author states, an in--depth analysis about detection and correction of errors is not performed, but the author only aimed at giving general considerations and ideas. In the other works dealing with this coding theory (such as \cite{Basu}, \cite{Basu2}, \cite{Esma}, \cite{Pra}, \cite{Pra2}), an in--depth discussion is still missing. Specifically, we would like to highlight that solving a Diophantine equation is not a simple task and it can be very time consuming. Moreover, without some specific conditions on the messages and some detailed considerations on the quality of the approximation in Proposition \ref{prop:no-err}, it could be impossible to detect the correct message among the infinite solutions of a Diophantine equation. 
We aim at performing a detailed discussion about this issues in the next sections, providing some considerations that improve the detection and correction of errors in the Fibonacci code and make also possible to correct errors which otherwise would not be correctable.
Finally, as one can see, 
the correctness of $\det M$ 
plays a fundamental role in the decoding technique.
From a practical point of view,
it is hard to think of a real channel in which one can guarantee the correct transmission of $\det M$ only.
One solution proposed by Stakhov is to send $\det M$ using a standard error correction code. 
Though, this does not seem a very practical solution.


\section{A discussion about error detection and correction}
\label{sec:errors}

In this section we first prove properties on the quality of the approximation of the Golden ratio given by the received codeword.
Secondly, we show how to improve the error detection phase, by providing a condition that allows us to avoid the redundant checking of all the hypothesis of double error.
As a third result we show how to correct double errors not in the same row with simple calculations
instead of solving Diophantine equations.
And finally, we show how to restrict the message space in order not to have
ambiguities in the decoding of double errors on the same row.

We would like to warn the reader, that, when we speak about errors, we are not referring to \emph{bit} errors, 
as it is usually done in coding theory. 
We instead use Stakhov terminology, and 
refer to \emph{integer} errors, which usually include several bits.

\subsection{Preliminaries}

In the following, we will fix $n$ as an odd positive integer for the coding matrix $Q^n$ (similar results hold when $n$ even). In this case, given a message $M$ and the corresponding codeword $C = M \cdot Q^n$, we have
$$
m_1 = -F_{n-1} c_1 + F_n c_2, \quad 
m_2 = F_n c_1 - F_{n+1} c_2, \quad
m_3 = - F_{n-1} c_3 + F_n c_4, \quad 
m_4 = F_n c_3 - F_{n+1} c_4  
$$
from which we obtain the following inequalities:
\begin{equation} \label{eq:approx} \cfrac{F_{n+1}}{F_n} < \cfrac{c_1}{c_2} < \cfrac{F_n}{F_{n-1}}, \quad \cfrac{F_{n+1}}{F_n} < \cfrac{c_3}{c_4} < \cfrac{F_n}{F_{n-1}}   \end{equation}
which explicit the quality of the approximations stated in Proposition \ref{prop:no-err}. In the following, we prove some properties that can be exploited for providing a more precise strategy in the detection and correction of errors.

\begin{proposition} \label{prop:one-error}
	Let $h$ be an integer, if $m_1, m_2, m_3, m_4< F_{n-1}$, then
	\begin{align*}
		\cfrac{c_1 + h}{c_2} 
		&
		\in 
		\left( \cfrac{F_{n+1}}{F_n}, \cfrac{F_n}{F_{n-1}} \right), 
		\quad 
		\cfrac{c_1}{c_2+h} 
		\in 
		\left( \cfrac{F_{n+1}}{F_n}, \cfrac{F_n}{F_{n-1}} \right), 
		\\
		\cfrac{c_3 + h}{c_4} 
		&
		\in 
		\left(\cfrac{F_{n+1}}{F_n}, \cfrac{F_n}{F_{n-1}}\right), 
		\quad 
		\cfrac{c_3}{c_4+h} 
		\in
		\left(\cfrac{F_{n+1}}{F_n}, \cfrac{F_n}{F_{n-1}}\right)
	\end{align*}
	if and only if $h = 0$.
\end{proposition}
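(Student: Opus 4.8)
The plan is to split the biconditional. The ``if'' direction is immediate: for $h=0$ the four displayed inclusions are exactly the inequalities \eqref{eq:approx}, which already hold because $m_1,m_2,m_3,m_4$ are positive. So everything reduces to the ``only if'' direction, and the strategy there is to pin down the fractions $c_1/c_2$ and $c_3/c_4$ precisely inside the interval $I=\left(\frac{F_{n+1}}{F_n},\frac{F_n}{F_{n-1}}\right)$ and then check that a nonzero integer perturbation of a numerator or a denominator forces the fraction out of $I$.

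First I would record the exact distances from $c_1/c_2$ to the two endpoints of $I$. Using the inversion formulas $m_1=-F_{n-1}c_1+F_nc_2$ and $m_2=F_nc_1-F_{n+1}c_2$ displayed above, a direct computation gives
\[
\frac{c_1}{c_2}-\frac{F_{n+1}}{F_n}=\frac{m_2}{F_n\,c_2},\qquad
\frac{F_n}{F_{n-1}}-\frac{c_1}{c_2}=\frac{m_1}{F_{n-1}\,c_2},
\]
so the gap to the left endpoint is governed by $m_2$ and the gap to the right endpoint by $m_1$ (and similarly for $c_3/c_4$ with $m_3,m_4$). Now writing $\frac{c_1+h}{c_2}=\frac{c_1}{c_2}+\frac{h}{c_2}$ and clearing the positive denominators, the requirement $\frac{c_1+h}{c_2}<\frac{F_n}{F_{n-1}}$ becomes $F_{n-1}h<m_1$, which together with the hypothesis $m_1<F_{n-1}$ forces $h<1$; the requirement $\frac{c_1+h}{c_2}>\frac{F_{n+1}}{F_n}$ becomes $-F_nh<m_2$, which together with $m_2<F_{n-1}<F_n$ forces $-h<1$. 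Since $h\in\ZZ$, these two inequalities give $h=0$. The inclusion for $\frac{c_3+h}{c_4}$ is the same computation after replacing $(c_1,c_2,m_1,m_2)$ by $(c_3,c_4,m_3,m_4)$.

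For the inclusions $\frac{c_1}{c_2+h}\in I$ and $\frac{c_3}{c_4+h}\in I$ I would run the analogous argument, now perturbing the denominator. The one extra observation is that membership in $I$, which consists of positive numbers, already forces $c_2+h>0$, so one may clear $c_2+h$ without reversing inequalities; then $\frac{c_1}{c_2+h}>\frac{F_{n+1}}{F_n}$ rearranges to $F_{n+1}h<m_2$ and $\frac{c_1}{c_2+h}<\frac{F_n}{F_{n-1}}$ rearranges to $-F_nh<m_1$, and once more $m_1,m_2<F_{n-1}<F_n<F_{n+1}$ squeezes the integer $h$ into $(-1,1)$, hence $h=0$.

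I expect the only delicate point to be the sign bookkeeping when clearing denominators — which is precisely why the positivity of $c_2+h$ has to be noted before multiplying through in the second family — together with checking that the hypothesis $m_i<F_{n-1}$ (rather than a weaker bound) is exactly what makes the squeeze close, the tight case being $F_{n-1}h<m_1$ in the estimate for $\frac{c_1+h}{c_2}$. No Diophantine input is required: the only ambient fact used is that $I$ is a genuine nonempty interval, which is Cassini's identity for odd $n$.
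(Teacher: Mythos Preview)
Your proof is correct and follows essentially the same approach as the paper: both derive the inequalities $-m_2/F_n < h < m_1/F_{n-1}$ from the inclusion $\frac{c_1+h}{c_2}\in I$ (via Cassini's identity after clearing denominators) and then use $m_1,m_2<F_{n-1}$ to force the integer $h$ to be $0$. You actually go a bit further than the paper by explicitly treating the denominator-perturbation cases $\frac{c_1}{c_2+h}$ and flagging the positivity of $c_2+h$ before clearing; the paper's proof spells out only the numerator case $\frac{c_1+h}{c_2}$ and leaves the remaining three as understood.
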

\begin{proof}
	If $h = 0$, by \eqref{eq:approx} we have 
	$\cfrac{c_1}{c_2} \in \left( \cfrac{F_{n+1}}{F_n}, \cfrac{F_n}{F_{n-1}} \right)$. Now, let us suppose
	$\cfrac{c_1 + h}{c_2} \in \left( \cfrac{F_{n+1}}{F_n}, \cfrac{F_n}{F_{n-1}} \right).$
	This means that 
	$\cfrac{m_1 F_{n+1} + m_2 F_n + h}{m_1 F_n + m_2 F_{n-1}} < \cfrac{F_n}{F_{n-1}}$
	and with simple calculations we obtain
	$m_1 (F_{n+1} F_{n-1} - F_n^2) + h F_{n-1} < 0,$
	i.e.
	$h < \cfrac{m_1 (-1)^{n-1}}{F_{n-1}}.$
	Similarly, from
	$\cfrac{m_1 F_{n+1} + m_2 F_n + h}{m_1 F_n + m_2 F_{n-1}} > \cfrac{F_{n+1}}{F_n}$
	we get
	$h > \cfrac{(-1)^n m_2}{F_n}.$
	Thus, if $m_1, m_2 < F_{n-1}$, the only integer in the interval $\left( \cfrac{(-1)^n m_2}{F_n}, \cfrac{(-1)^{n-1} m_1}{F_{n-1}}  \right)$ is 0.
	\end{proof}

\begin{proposition} \label{prop:two-errors}
	Given an integer $h$, we have that  
	$$\cfrac{c_1 + h}{c_2 + k} \in \left(\cfrac{F_{n+1}}{F_n}, \cfrac{F_n}{F_{n-1}}\right), \quad \cfrac{c_3 + h}{c_4 + k} \in \left(\cfrac{F_{n+1}}{F_n}, \cfrac{F_n}{F_{n-1}}\right)$$
	if and only if 
	$$\cfrac{h F_{n-1} - m_1}{F_n} < k < \cfrac{h F_n + m_2}{F_{n+1}}.$$
\end{proposition}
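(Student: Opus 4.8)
The plan is to reduce the two fractional membership conditions to a pair of linear inequalities in $k$ (with $h$ as a parameter) by clearing denominators, and then to simplify the resulting expressions with Cassini's identity. I would carry out the argument for the pair $c_1, c_2$; the statement for $c_3, c_4$ follows verbatim after replacing $m_1, m_2$ by $m_3, m_4$.

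First I would recall that, since $C = M \cdot Q^n$, we have $c_1 = F_{n+1} m_1 + F_n m_2$ and $c_2 = F_n m_1 + F_{n-1} m_2$, so that $c_2$ — and, in the regime of interest, $c_2 + k$ — is positive. This lets me multiply the inequalities
\[
\frac{F_{n+1}}{F_n} < \frac{c_1 + h}{c_2 + k} < \frac{F_n}{F_{n-1}}
\]
through by the positive quantities $F_n(c_2 + k)$ and $F_{n-1}(c_2 + k)$ without reversing them. The right-hand inequality becomes $(c_1 + h) F_{n-1} < (c_2 + k) F_n$, i.e. $c_1 F_{n-1} - c_2 F_n < k F_n - h F_{n-1}$, and the left-hand one becomes $(c_1 + h) F_n > (c_2 + k) F_{n+1}$, i.e. $c_1 F_n - c_2 F_{n+1} > k F_{n+1} - h F_n$.

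The key step is then to evaluate the two combinations $c_1 F_{n-1} - c_2 F_n$ and $c_1 F_n - c_2 F_{n+1}$. Substituting the expressions for $c_1, c_2$ and cancelling, the mixed $m_1$- and $m_2$-terms vanish, leaving $c_1 F_{n-1} - c_2 F_n = m_1(F_{n+1} F_{n-1} - F_n^2)$ and $c_1 F_n - c_2 F_{n+1} = m_2(F_n^2 - F_{n-1} F_{n+1})$. By Cassini's identity $F_{n+1} F_{n-1} - F_n^2 = (-1)^n$, and since $n$ is odd, the first equals $-m_1$ and the second equals $m_2$. Plugging these in, the right-hand inequality reads $-m_1 < k F_n - h F_{n-1}$, i.e. $k > \tfrac{h F_{n-1} - m_1}{F_n}$, and the left-hand one reads $m_2 > k F_{n+1} - h F_n$, i.e. $k < \tfrac{h F_n + m_2}{F_{n+1}}$. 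Since each manipulation is an equivalence, conjoining the two bounds yields exactly $\tfrac{h F_{n-1} - m_1}{F_n} < k < \tfrac{h F_n + m_2}{F_{n+1}}$, as claimed.

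I do not expect a genuine obstacle here: this is the same bookkeeping already used in the proof of Proposition~\ref{prop:one-error}, now carried out with an extra shift $k$ in the denominator. The only point needing a word of care is the sign of the denominator $c_2 + k$ when clearing fractions; as in Proposition~\ref{prop:one-error}, one uses that in the relevant case (where $c_2 + k$ is a candidate corrected codeword entry) this quantity is positive, so the cross-multiplications are true equivalences and not mere implications.
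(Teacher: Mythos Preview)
Your proposal is correct and follows essentially the same route as the paper: substitute $c_1 = F_{n+1}m_1 + F_n m_2$, $c_2 = F_n m_1 + F_{n-1} m_2$, cross-multiply under the positivity assumption on $c_2+k$, and simplify via Cassini's identity (with $n$ odd) to extract the two linear bounds on $k$. The paper additionally remarks in one line that the case $c_2+k<0$ yields no admissible $k$, which is the small point you flag at the end but do not carry out; otherwise the arguments coincide.
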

\begin{proof}
	Considering $\cfrac{c_1 + h}{c_2 + k} < \cfrac{F_n}{F_{n-1}}$ we get
	$\cfrac{m_1 F_{n+1} + m_2 F_n + h}{m_1 F_n + m_2 F_{n-1} + k} < \cfrac{F_n}{F_{n-1}}.$
	Supposing $k > - (m_1 F_n + m_2 F_{n-1})$, it is easy to obtain
	$h > \cfrac{h F_{n-1} - m_1}{F_n}.$
	Similarly, considering $\cfrac{c_1 + h}{c_2 + k} > \cfrac{F_{n+1}}{F_n}$ we obtain
	$k < \cfrac{h F_n + m_2}{F_{n+1}}.$
	Finally, note that the case $k < - (m_1 F_n + m_2 F_{n-1})$ does not yield any possible value for $k$ that satisfies $\cfrac{c_1 + h}{c_2 + k} \in \left(\cfrac{F_{n+1}}{F_n}, \cfrac{F_n}{F_{n-1}}\right)$.
	\end{proof}

\subsection{Improving the error detection phase}

In Section \ref{sec:fibo-decoding}, we have briefly seen the strategy for detecting and correcting errors in the codeword. Specifically, the first step is to exploit the checking element $\det M$, 
i.e., if $\det C \not  = (-1)^n \det M$, then $C$ contains one or more errors.

If we are in the second case, we proceed by evaluating the quantities $x_i$'s given in equations \eqref{eq:xi} and we use them as follows:

\begin{enumerate}
	\item if only one $x_i$ is an integer, then $C$ contains only one error and it can be corrected as seen in Section \ref{sec:fibo-decoding};
	\item if all the $x_i$ are not integers, then $C$ contains two or more errors.
\end{enumerate} 

Stakhov \cite{Stakhov} suggested some strategies to manage the second case.
but they should be analyzed and discussed more deeply. 
First of all, we can observe that it is not efficient to check all the hypotheses of double errors and possibly also all the hypotheses of triple errors. It is surely more convenient to exploit some properties that can identify the situation. For this purpose, it is convenient to check if the quantities $\frac{c_1}{c_2}$ and $\frac{c_3}{c_4}$ belong to the approximation interval $\left[ \frac{F_{n+1}}{F_n}, \frac{F_n}{F_{n-1}} \right]$:

\begin{enumerate}
	\item if $\cfrac{c_1}{c_2} \in \left[ \frac{F_{n+1}}{F_n}, \frac{F_n}{F_{n-1}} \right]$ and $\cfrac{c_3}{c_4} \not \in \left[ \frac{F_{n+1}}{F_n}, \frac{F_n}{F_{n-1}} \right]$, then $c_1, c_2$ are correct and $c_3, c_4$ are wrong;
	\item if $\cfrac{c_1}{c_2} \not\in \left[ \frac{F_{n+1}}{F_n}, \frac{F_n}{F_{n-1}} \right]$ and $\frac{c_3}{c_4} \in \left[ \frac{F_{n+1}}{F_n}, \frac{F_n}{F_{n-1}} \right]$, then $c_1, c_2$ are wrong and $c_3, c_4$ are correct;
	\item otherwise, we have one of the following situation
	\begin{enumerate}
		\item $c_1, c_3$ are wrong or $c_2, c_4$ are wrong;
		\item $c_1, c_4$ are wrong or $c_2, c_3$ are wrong;
		\item there are three errors;
		\item there are four errors .
	\end{enumerate}
\end{enumerate}

Four errors can not be corrected. 
Moreover, in this paper, we do not discuss the situation with three errors, 
but only the case where two errors occurred.

\subsection{Correction of two errors not in the same row}\label{sec:twoerr_not_same_row}

In the following we will use this notation:
\begin{itemize}
	\item $M = \begin{pmatrix} m_1 & m_2 \cr m_3 & m_4 \end{pmatrix}$ is the message
	\item $C = \begin{pmatrix} c_1 & c_2 \cr c_3 & c_4  \end{pmatrix}$ is the codeword correspondig to $M$, i.e., $C = M \cdot Q^n$
	\item $\bar C = \begin{pmatrix} \bar c_1 & \bar c_2 \cr \bar c_3 & \bar c_4  \end{pmatrix}$ is the received message that could contain some errors, i.e., $\bar C = C + E$, where $E = \begin{pmatrix} e_1 & e_2 \cr e_3 & e_4  \end{pmatrix}$ is the error matrix, whose entries are integer numbers (if all the entries of $E$ are zero, then the received message is correct, i.e., it coincides with the codeword $C$).
	\item $a = \frac{F_{n+1}}{F_n}$ and $b = \frac{F_n}{F_{n-1}}$
\end{itemize}

Let us suppose that we have received $\bar C$ such that $\cfrac{\bar c_1}{\bar c_2} \not\in \left[ a, b \right]$, $\cfrac{\bar c_3}{\bar c_4} \not\in \left[a, b \right]$ and we are in the presence of two errors. Thus, the possible errors are in the entries $\bar c_1, \bar c_3$ or $\bar c_2, \bar c_4$ or $\bar c_1, \bar c_4$ or $\bar c_2, \bar c_3$, i.e., the errors occur not in the same row of the matrix $\bar C$. In this case, Stakhov \cite{Stakhov} suggested to solve the following Diophantine equations
\begin{equation} \label{eq:lin-diof} x\bar c_4 - \bar c_2y = (-1)^n \det M, \quad \bar c_1x - y\bar c_3 = (-1)^n \det M \end{equation}
\begin{equation} \label{eq:diof-fatt} xy - \bar c_2\bar c_3 = (-1)^n \det M, \quad \bar c_1\bar c_4 - xy = (-1)^n \det M \end{equation}
and choose the solutions that allows to satisfy the condition of approximations for correcting the errors. This approach has several problems. First of all, without any condition on the message $M$, it could be hard to find among the infinite solutions of the previous Diophantine equations those that belong to the approximation interval $[a, b]$. Moreover, the Diophantine equations \eqref{eq:diof-fatt} are hard to solve if $\det M$ is very large, indeed they are equivalent to the factorization problem of an integer number.
If we consider messages $M$ such that $m_1, m_2, m_3, m_4 < F_{n-1}$, thanks to Proposition~\ref{prop:one-error}, we are able to correct the errors without solving any Diophantine equation. 
For instance, let us consider a received message $\bar C$ containing errors in the first column, i.e., $e_1, e_3 \not= 0$ and $e_2=e_4=0$. By Proposition \ref{prop:one-error}, we know that $c_1$ is the only integer such that $\cfrac{c_1}{c_2} \in \left[ a, b \right]$ and we can easily find the integer $e_1$. Indeed, if $\cfrac{\bar c_1}{\bar c_2} = \cfrac{\bar c_1}{c_2}  > b$, then we know that $e_1$ is the smallest integer greater than the rational number $\bar c_1 - \bar c_2 \cdot b = \bar c_1 - c_2 \cdot b$, i.e., 
$$e_1 = \lceil \bar c_1 - c_2 \cdot b \rceil,$$ 
where $\lceil \cdot \rceil$ is the ceiling function. Similarly, we have that 
$$e_3 = \lceil \bar c_3 - c_4 \cdot b \rceil.$$
We can observe that, in all the situations discussed here, where two errors occur not in the same row, the corrections are independent. For instance, in the previous case, for evaluating $e_1$ we have only used $\bar c_1$ and $\bar c_2 = c_2$ and for evaluating $e_3$ we have only used $\bar c_3$ and $\bar c_4 = c_4$. All the cases of two errors that do not occur in the same row can be detected and corrected with similar considerations. We summarize how to evaluate $e_i$ in all the situations that can occur in this scenario:
\begin{itemize}
	\item if $\cfrac{\bar c_i}{c_j} > b$, then $e_i =\lceil \bar c_i - c_j b \rceil$, for $i=1$ and $j=2$ or $i=3$ and $j=4$;
	\item if $\cfrac{c_i}{\bar c_j} > b$, then $e_j =\lfloor \bar c_j - \frac{c_i}{b} \rfloor$, for $i=1$ and $j=2$ or $i=3$ and $j=4$;
	\item if $\cfrac{\bar c_i}{c_j} < a$, then $e_i =\lfloor \bar c_i - c_j a \rfloor$, for $i=1$ and $j=2$ or $i=3$ and $j=4$;
	\item if $\cfrac{c_i}{\bar c_j} < a$, then $e_j =\lceil \bar c_j - \frac{c_i}{a} \rceil$, for $i=1$ and $j=2$ or $i=3$ and $j=4$;
\end{itemize}
We can observe that the previous relations do not use the checking element $\det M$ and they are surely more efficient to evaluate than solving Diophantine equations. We would like to highlight that if the positions of the errors is known in the matrix $C$, then it is not necessary to send the checking element $\det M$ together with the codeword $C$. However, if these positions are not known, then it is necessary to use it to proceed with the correction. 
For instance, if $\cfrac{\bar c_1}{\bar c_2} > b$ and we do not know whether $e_1 = 0$ or $e_2 = 0$, we have to evaluate both $e_1 = \lceil \bar c_1 - \bar c_2 b \rceil$ and $e_2 = \lfloor \bar c_2 - \frac{\bar c_1}{b} \rfloor$. If we use $e_1$ for retrieving $c_1 = \bar c_1 - e_1$, then $\frac{c_1}{\bar c_2}$ belongs to $[a, b]$, but also using $e_2$ for retrieving $c_2 = \bar c_2 - e_2$, we have $\frac{\bar c_1}{c_2}$ and we are not able to retrieve the codeword corresponding to the sent message. In this situation, the only solution is to use the checking element $\det M$ and comparing the determinants.

\subsection{Correction of two errors in the same row}\label{sec:twoerr_same_row}

The case where two errors occur in the same row of the matrix $\bar C$ is the more difficult to manage. In the following, we suppose that the errors are in the entries $\bar c_1$ and $\bar c_2$, i.e., $e_1, e_2 \not=0$ and $e_1 = e_2 = 0$. In this situation, Stakhov \cite{Stakhov} suggests to solve the Diophantine equation $x \bar c_4 - y \bar c_3 = (-1)^n \det M$ and to correct the errors by means of the solutions that belong to the approximation interval. However, this is not sufficient to detect and correct such errors. 

Let $k\in\NN$, we consider two messages $M$ and $M'$ such that
\begin{equation}\label{eq.MeM1}
M = \left(\begin{array}{cc} m_1 & m_2 \\ m_3 & m_4  \end{array}\right) \mbox{ and }
M' = \begin{pmatrix} m_1+k\,m_3 & m_2+k\,m_4 \cr m_3 & m_4  \end{pmatrix}
\end{equation}
whose elements are positive non--zero integers. The encoding matrices are respectively
\begin{equation}\label{eq.CeC1}
C = \begin{pmatrix} c_1 & c_2 \cr c_3 & c_4  \end{pmatrix} \mbox{ and }
C' = \begin{pmatrix} c_1 + k\,c_3 & c_2+k\,c_4 \cr c_3 & c_4  \end{pmatrix} \end{equation}
We denote, as usual, by $\bar C$ and $\bar C'$ the received matrices. Since $\det(M)=\det(M')$, for correcting errors that are in the first row, we should solve the Diophantine equation
$$x \bar c_4 - y \bar c_3 = (-1)^n \det M$$
for both the matrices $\bar C$ and $\bar C'$, which clearly have two different solutions $(c_1, c_2)$ and $(c_1 + k\,c_3, c_2+k\,c_4)$ that satisfy the checking relation and the approximation condition. This means that when we receive the wrong matrices we are not able to correct the errors since we find more than one possible couple that corrects the errors.

To avoid this problem, we have to restrict the space of messages.
\begin{definition}
	Let $M$ be a $2 \times 2$ matrix
	$$
	M = \left(\begin{array}{cc} m_1 & m_2 \\ m_3 & m_4  \end{array}\right) 
	$$ 
	such that $m_1\geq m_3$ and $m_2\leq m_4$ or viceversa, that is, $m_1\leq m_3$ and $m_2\geq m_4$. Then $M$ is \textbf{minimal}.
\end{definition}

Note that if $M$ is minimal, then 
there exists no 
natural number $k$ such that $m_1=a+k\,m_3$ and $m_2=b+k\,m_4$ where $a,b\in\NN$ (or $m_3=a+k\,m_1$ and $m_4=b+k\,m_2$ where $a,b\in\NN$).

Therefore, the space of messages has to contain just minimal matrices. In this way, we are able to correct two errors in the same row since we take the smallest Diophantine equation solutions that verify the checking relations.

Indeed, let us consider the two messages $M,M'$ as in \eqref{eq.MeM1}, such that $M$ is minimal, and let $C$ and $C'$ be the corresponding codewords as in \eqref{eq.CeC1}.
Now suppose the matrices $\bar C$ and $\bar C'$ containing errors in the first row are received. 
We solve the Diophantine equation 
\begin{equation}\label{eq:diof}
xc_4 - yc_3 = (-1)^n \det M
\end{equation}
and we find  two solution $(x_1,y_1)$ and $(x_2,y_2)$ such that
$$
x_1 = F_{n+1} m_1 + F_n m_2 =-\det M x_0 +t_1 c_3\quad
y_1 = F_n m_1 + F_{n-1} m_2  =-\det M y_0 +t_1 c_4 
$$
and
$$
x_2 = -\det M x_0 +t_2 c_3\quad
y_2 = -\det M y_0 +t_2 c_4 
$$
where $t_1,t_2\in\mathbb Z$ and $x_0,y_0$ verify the equation $c_4\,x_0-c_3\,y_0=1$. 
Since
$$
\begin{cases} 
x_2 = F_{n+1} m_1 + F_n m_2 + k\,(F_{n+1} m_3 + F_n m_4) = x_1+k\,c_3=-\det M x_0 +(k+t_1) c_3\cr 
y_2 = F_n m_1 + F_{n-1} m_2+ k\,(F_{n} m_3 + F_{n-1} m_4) = y_1+k\,c_4= -\det M x_0 +(k+t_1)c_4   
\end{cases} 
$$

and $t_2>t_1$, to find the solution $(x_1,y_1)$, i.e., to correct the error and encode the codeword $C$, it is sufficient to take the smallest solution of the previous Diophantine equation \eqref{eq:diof} that verifies the checking relations.


\subsection{Correction of three errors}

Let us consider the case with three errors. 
Suppose that, for example, we received 
\begin{equation}\label{eq:matrix3er}
\overline C = \begin{pmatrix} c_1 &  \bar c_2 \cr \bar c_3 & \bar c_4  \end{pmatrix}
\end{equation}
such that the errors are, for instance, in the entries $\bar c_2,\bar c_3$ and $\bar c_4$, i.e., $e_2, e_3, e_4 \not=0$ and $e_1 = 0$. Therefore $\cfrac{c_1}{\bar c_2}, \cfrac{\bar c_3}{\bar c_4} \not\in \left[a, b \right]$. 
In this situation, Stakhov \cite{Stakhov} suggests to solve the nonlinear Diophantine equation $c_1 z - xy  = (-1)^n \det M$ and to correct the errors by means of the solutions that belong to the approximation interval. However, this nonlinear Diophantine equation can be very hard to solve. Indeed, it is related to equations \eqref{eq:diof-fatt}, when $z$ is fixed, and solving these kind of equations is equivalent to find the factorization of an integer number, which is infeasible for large integers.

We propose a trial-and-error approach to locate the exact position of the errors, and then we correct them.
Precisely, when we know where the errors are, we proceed as follows:
\begin{enumerate}
	\item Consider the row that has only one error. Correct this row using the method described in Section~\ref{sec:twoerr_not_same_row}.
	\item Consider the second row and correct it using the Diophantine equation as shown in Section~\ref{sec:twoerr_same_row}.
\end{enumerate}

For example,
suppose $\overline C = \begin{pmatrix} c_1 &  \bar c_2 \cr \bar c_3 & \bar c_4  \end{pmatrix}$ 
with $\cfrac{ c_1}{\bar c_2}  > b$ is received. 
As in Section~\ref{sec:twoerr_not_same_row}, 
we compute 
$e_2 = \lceil \bar c_2 - c_1 / b \rceil$ and  
recover $c_2= \bar c_2 - e_2$. 
Now, in $\overline C$ we replace $\bar c_2$ with $c_2$, 
obtaining
$
\overline C' = 
\begin{pmatrix}  c_1 & c_2 \cr \bar c_3 & \bar c_4 \end{pmatrix}
$. 
In order to correct $\overline C'$, 
we should solve the Diophantine equation 
$c_1 y - c_2 x = (-1)^n \det M$, 
and choose the smallest positive solution $(x_0,y_0)$ that verifies the checking relation and the approximation condition. 

Note that usually we do not know where the errors are, which means we do not know which is the row with only one error. 
To locate the position with no error we sequentially select each entry of the matrix 
$\overline C$
and try to decode it as described above, 
and we add a check after the first decoding step.
Let us illustrate the possible scenarios following the same example introduced at the beginning of the section.

\paragraph{Case 1 - Wrong row.}

This case happens when we perform step 1 on a row that has two errors.
Suppose we received $\overline C$ as in  \eqref{eq:matrix3er}, 
and that our guess is that the only correct entry in $\overline C$ is $\bar c_4$.
Thus, we try to correct the second row by finding the  error $e_3$. 
We then compute  $\bar c'_3$ and $\bar c_4$ and check if $\cfrac{\bar c'_3}{\bar c_4}\in [a,b]$.
\begin{itemize}
	\item If $\cfrac{\bar c'_3}{\bar c_4}\notin [a,b]$ we understand that we make a mistake and so we restart the procedure changing our guess. For example, we guess that the only correct entry in $\overline C$ is $\bar c_3$,
	and we start again the process to find and correct the errors.
	\item If $\cfrac{\bar c'_3}{\bar c_4}\in [a,b]$ we do not understand that we make a mistake and so we suppose that the solution $(\bar c_3',\bar c_4)$ is correct and we go on with the correction of the other two elements $c_1,\bar c_2$ by solving the corresponding  Diophantine equation, which allow us to find $\bar c_1,\bar c'_2$. 
	Now, to realize if we made a mistake we have to check if $\cfrac{\bar c_1}{\bar c'_2} \in [a,b]$. Note that if $\bar c'_3/\bar c_4\in [a,b]$ then $x_1/y_1\not\in [a,b]$ (see Lemma~\ref{thm:almeno_uno}).
\end{itemize}
Now, let us see in more detail how the decoding works.
As before, suppose we received the matrix \eqref{eq:matrix3er}, but our guess is that the only correct entry is $\bar c_4$.  
Let us consider the case with $\cfrac{\bar c_3}{\bar c_4}  > b$, so that we compute
$$e'_3 = \lceil \bar c_3 - \bar c_4 \cdot b \rceil = e_3 + \Big\lceil -\cfrac{m_3}{ F_{n-1}} -   e_4 \cdot b \Big\rceil = e_3 + h,$$
where $h\in \ZZ$, and 
$$\bar c'_3 = \bar c_3 - e'_3 = c_3 - h.$$
To verify if $\cfrac{\bar c'_3}{\bar c_4}\in [a,b]$ we use Proposition \ref{prop:two-errors} and check, knowing $h$, if $e_4$ is such that
\begin{equation}\label{eq:e4}
\cfrac{-h F_{n-1} - m_3}{F_n} < e_4 < \cfrac{-h F_n + m_4}{F_{n+1}}\,.
\end{equation}

\begin{lemma}\label{thm:e_int}
	Suppose to have received the matrix \eqref{eq:matrix3er} but we guess to have
	\begin{equation}\label{eq:matrix3ercase1}
	\overline C' = 
	\begin{pmatrix} 
	\bar c_1 & \bar c_2 \cr \bar c_3 &  c_4  
	\end{pmatrix}
	\,.
	\end{equation}
	If we compute $\bar c'_3$ and $\bar c_4$ as before we have that $\bar c'_3/\bar c_4\in [a,b]$ if and only if
	\small{$$
	m_3+e_4 F_{n}= \left\{\begin{array}{lll} 
	q F_{n-1}+r, \mbox{ where } r\leq F_{n-1}, && e_4>0 \\ 
	q F_{n-1}+r, \mbox{ where } r\leq F_{n-1}  \mbox{ and  } F_{n}^2+1<F_{n+1}(m_3+r)+F_n m_4, &&  e_4<0\\
	\end{array}\right.
	$$}
\end{lemma}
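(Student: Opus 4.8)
\emph{Proof proposal.} The plan is to convert the geometric condition ``$\bar c'_3/\bar c_4\in[a,b]$'' into an explicit arithmetic statement about $m_3,m_4,e_4$ and the Fibonacci numbers; once Proposition~\ref{prop:two-errors} and Cassini's identity have done the work, what remains is careful bookkeeping. We work, as in the discussion preceding the lemma, in the case $\bar c_3/\bar c_4>b$, so that $\bar c'_3=c_3-h$ and $\bar c_4=c_4+e_4$ with $h=\lceil -(m_3+e_4F_n)/F_{n-1}\rceil$. First I would perform the Euclidean division $m_3+e_4F_n=qF_{n-1}+r$ with $0\le r<F_{n-1}$; since $\lceil -(qF_{n-1}+r)/F_{n-1}\rceil=-q$ for every such $r$, this gives $h=-q$, hence $\bar c'_3=c_3+q$. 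Next I would note that, by definition of the ceiling, $\bar c'_3$ is the largest integer not exceeding $b\,\bar c_4$; therefore, whenever $\bar c_4>0$, the bound $\bar c'_3/\bar c_4\le b$ holds automatically, and ``$\bar c'_3/\bar c_4\in[a,b]$'' reduces to the single inequality $F_n\bar c'_3\ge F_{n+1}\bar c_4$ --- the non-strict version of the right-hand bound in \eqref{eq:e4}, which is also what Proposition~\ref{prop:two-errors} yields with $h$ replaced by $-h$ and $k$ by $e_4$.

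Then I would expand this inequality using $n$ odd and Cassini in the form $F_nc_3-F_{n+1}c_4=m_4$: one gets $F_n\bar c'_3-F_{n+1}\bar c_4=m_4+F_nq-F_{n+1}e_4$, so the condition is $F_nq\ge F_{n+1}e_4-m_4$. Substituting $F_{n-1}q=m_3+e_4F_n-r$, multiplying by $F_{n-1}$, and applying Cassini once more as $F_{n+1}F_{n-1}-F_n^2=-1$, everything collapses to the clean inequality $F_nr\le F_nm_3+F_{n-1}m_4+e_4$, whose right-hand side is exactly $\bar c_4$. So the substance of the lemma is: $\bar c'_3/\bar c_4\in[a,b]$ if and only if $\bar c_4>0$ and $F_nr\le\bar c_4$, with $r$ the remainder of $m_3+e_4F_n$ modulo $F_{n-1}$.

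It then remains to repackage this equivalence as the two displayed cases. For $e_4>0$ we have $\bar c_4=c_4+e_4>c_4>0$, so the positivity clause is vacuous, and (using the running hypothesis $m_1,m_2,m_3,m_4<F_{n-1}$ inherited from Section~\ref{sec:twoerr_not_same_row}) the inequality $F_nr\le\bar c_4$ is what the first displayed condition on $m_3+e_4F_n$ records. For $e_4<0$ one has to keep the clause $\bar c_4>0$ --- otherwise $\bar c'_3/\bar c_4\le 0<a$ --- and, writing $\bar c_4=c_4+e_4$ with $c_3=F_{n+1}m_3+F_nm_4$ so that $F_{n+1}(m_3+r)+F_nm_4=c_3+F_{n+1}r$, and using $F_n^2+1=F_{n+1}F_{n-1}+2$ (Cassini, $n$ odd), the inequality $F_nr\le\bar c_4$ becomes $F_n^2+1<F_{n+1}(m_3+r)+F_nm_4$. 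Throughout one has to track the boundary value $\bar c'_3/\bar c_4=a$, which is admissible since $[a,b]$ is closed and which is precisely the gap between the strict inequalities of Proposition~\ref{prop:two-errors} and the non-strict ones needed here.

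I expect this last repackaging to be the real obstacle. The Fibonacci manipulations in the first two paragraphs are routine applications of Cassini's identity once $n$ is fixed odd, but matching the statement line by line --- pinning down the remainder convention so that $h=-q$, keeping strict versus non-strict inequalities aligned because of the closed interval, and, in the $e_4<0$ case, correctly isolating the positivity requirement $\bar c_4>0$ as the second displayed inequality --- is where the care has to go.
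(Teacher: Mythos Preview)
Your approach is genuinely different from the paper's and considerably cleaner. The paper proceeds by a six-way case split: it separates $e_4>0$ from $e_4<0$, writes the ceiling explicitly in each regime via a sign-dependent formula for $\lceil m/n\rceil$, and then checks the two inequalities of \eqref{eq:e4} one at a time in each subcase. You instead make two structural observations that collapse most of this work: (i) the identity $\bar c_3-\lceil\bar c_3-b\bar c_4\rceil=\lfloor b\bar c_4\rfloor$ shows $\bar c'_3\le b\bar c_4$ automatically whenever $\bar c_4>0$, so only the lower bound $\bar c'_3/\bar c_4\ge a$ is ever in question; and (ii) a single Euclidean division $m_3+e_4F_n=qF_{n-1}+r$ gives $h=-q$ uniformly, after which two applications of Cassini reduce the lower bound to the single criterion $F_nr\le\bar c_4$. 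This is both shorter and more transparent than the paper's argument, and it is correct.

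Two points of friction with the paper deserve mention. First, for $e_4<0$ the paper's proof does \emph{not} use your division: it writes $|e_4|F_n-m_3=qF_{n-1}+r$, so the $r$ appearing in the second displayed condition of the lemma is the remainder of $-(m_3+e_4F_n)$, related to yours by $r_{\text{paper}}=F_{n-1}-r_{\text{yours}}$ when nonzero. Your final repackaging treats the two $r$'s as the same symbol; the match you want goes through only after this substitution, so that step needs to be made explicit. Second, your criterion $F_nr\le\bar c_4$ is strictly sharper than the lemma as literally stated: for $e_4>0$ the displayed condition ``$r\le F_{n-1}$'' is vacuous, whereas $F_nr\le\bar c_4$ can genuinely fail (e.g.\ $n=7$, $m_3=m_4=1$, $e_4=1$ gives $r=6$, $\bar c_4=22$, and $\bar c'_3/\bar c_4=35/22<a$). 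The paper's subcase~(c) for $e_4>0$ asserts the ratio is always in $[a,b]$, but the chain of equivalences there contains slips and the conclusion does not hold in general. So the obstacle you anticipate in your last paragraph is real, but it stems in part from defects in the target statement rather than from your argument.
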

\begin{proof}
	Let $m\in\ZZ$ and $n\in\NN$, then the ceiling function $\lceil m/n\rceil$ is
	$$ 
	\begin{array}{l} 
	\mbox {if } m<0 \quad \Big\lceil \cfrac{m}{n}\Big\rceil = \left\{\begin{array}{lll} 
	0 && m< n \\ 
	q && m=q n \\ 
	q &&  m=q n +r, \quad r< n \\ 
	\end{array}\right.\\
	\\
	 \mbox {if } m>0\quad \Big\lceil \cfrac{m}{n}\Big\rceil = \left\{\begin{array}{lll} 
	1 && m< n \\ 
	q && m=q n \\ 
	q+1 &&  m=q n +r, \quad r< n \\ 
	\end{array}\right.
	\end{array}
	$$
	Let $\bar c'_3 =  c_3 - h$ and $\bar c_4 = c_4 + e_4$ where 
	$h=\Big\lceil -\cfrac{m_3 + e_4F_n}{ F_{n-1}}\Big\rceil$.
	We consider two different case:
	\begin{itemize}
		\item $e_4>0$. In this case the numerator of the ceiling function is always negative. So we consider three different case:
		\begin{enumerate}
			\item[$(a)$] if $m_3 + e_4F_n < F_{n-1}$ that is impossible.
			\item[$(b)$] if $m_3 + e_4F_n =q F_{n-1}$ than $h=-q$. By Proposition \ref{prop:two-errors} we have that $\bar c'_3/\bar c_4$ belongs in $[a,b]$ if and only if the inequality \eqref{eq:e4} is verify, that is
			$(q F_{n-1} - m_3)/F_n < e_4$
			but in our case $e_4$ is exactly ${q F_{n-1} - m_3}{F_n}$ so $\bar c'_3/\bar c_4\notin [a,b]$.
			\item[$(c)$] if $m_3 + e_4F_n =q F_{n-1}+r$ with $r$ an integer such that $r< F_{n-1}$ than $h=-q$. As before, we consider the inequality \eqref{eq:e4} and the equation $e_4 =(q F_{n-1}+r-m_3)/F_{n}$. Since $m_3,m_4< F_{n-1}$, for one side we have		
			$$
			\qquad\qquad\cfrac{q F_{n-1} - m_3}{F_n} < e_4=\cfrac{q F_n + r - m_4}{F_{n}} \iff m_4<q(F_{n}+F_{n-1})+2F_{n-1}
			$$ 
			that is always true. To the other side we have
			$$
			\quad e_4=\cfrac{q F_n + r - m_4}{F_{n}}<\cfrac{q F_{n} + m_4}{F_{n+1}} \iff r F_{n+1}< q+F_{n+1}F_{n+2}
			$$ 
			that is always true since $r< F_{n-1}$. Therefor, by Proposition \ref{prop:two-errors} we have that $\bar c'_3/\bar c_4\in [a,b]$.
		\end{enumerate}
		\item $e_4<0$. In this case the numerator of the ceiling function is always positive since $m_3< F_{n-1}$. So we have 
		\begin{enumerate}
			\item[$(a)$] if $|e_4|F_n-m_3  < F_{n-1}$ than $e_4=-1$ and $h=1$.  It is simple to check that since $c_3/ c_4\in [a,b]$, then $\bar c'_3/\bar c_4 = (c_3-1)/(c_4-1) \notin [a,b]$. 
			\item[$(b)$] if $|e_4|F_n-m_3  = q F_{n-1}$ than $h=q$. Note that $\bar c'_3/\bar c_4<b$ if and only if 
			$$
			F_{n-1}(F_{n+1}m_3+F_n m_4-q) < F_n(F_{n}m_3+F_{n-1} m_4-|e_4|)$$ 
			that is, $(qF_{n-1}+m_3)(1-F_n)>0 $ which is impossible. 
			So $\bar c'_3/\bar c_4\notin [a,b]$. 
			\item[$(c)$] if $|e_4|F_n-m_3  =q F_{n-1}+r$ with $r$ an integer s.t. $r< F_{n-1}$ than $h=q+1$.  As before, we consider the inequality \eqref{eq:e4}. So for one side we have
			$$
			\qquad \cfrac{-(q+1) F_{n-1} - m_3}{F_n} < -|e_4|=\cfrac{-q F_{n-1}-r-m_3}{F_{n}} \iff r< F_{n-1}
			$$ 
			that is always true. 
			To the other side, considering $|e_4|F_n =q F_{n-1}+r+m_3$, we have
			\begin{align*}
				\qquad -|e_4|<\cfrac{-(q+1) F_{n} + m_4}{F_{n+1}} 
				& 
				\iff 
				F_{n}(F_{n+1}|e_4|+m_4)> (q+1) F^2_{n} 
				\\
				& 
				\iff 
				F_{n+1}(m_3+r)+F_n m_4 >F_{n}^2+1
			\end{align*}
			and we obtain the claim.
		\end{enumerate}
	\end{itemize}
	\end{proof}
Let us suppose that we are in the situation of Lemma \ref{thm:e_int}, that is,  $e_4$ belongs in that range. So we guess that we correct properly and we go on for the next step computing the other two elements $\bar c_1,c_2$ using the Diophantine equation. We have to find $(x,y)$ such that
\begin{equation}\label{eq:x1y1}
x \bar c_4- y\bar c_3' = -\det M.
\end{equation}
The solutions of this equation are
\begin{equation}
\begin{cases} 
x_1 = -\det M x_0 +t \bar c'_3\cr 
y_1 = -\det M y_0 +t \bar c_4
\end{cases} 
\end{equation}
where $t\in\mathbb Z$ and $x_0,y_0$ verify the equation $\bar c_4\,x_0-\bar c'_3\,y_0=1$.

\begin{lemma}\label{thm:almeno_uno} Suppose to have received the matrix \eqref{eq:matrix3er} but we guess to have \eqref{eq:matrix3ercase1}. We compute $\bar c'_3$ and $\bar c_4$ as before and suppose that $\bar c'_3/\bar c_4\in [a,b]$. Then we  find $(x_1,y_1)$ as solution of \eqref{eq:x1y1}. So 
	$$\mbox{if } \cfrac{\bar c'_3}{\bar c_4}\in [a,b] \mbox{ then } \cfrac{x_1}{y_1}\not\in [a,b].$$
\end{lemma}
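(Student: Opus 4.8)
The plan is to transport the statement to the decoded side. For $n$ odd one has $Q^{-n}=\begin{pmatrix}-F_{n-1}&F_n\\ F_n&-F_{n+1}\end{pmatrix}$, so I would set $\overline C''=\begin{pmatrix}x_1&y_1\\ \bar c_3'&\bar c_4\end{pmatrix}$ and $M''=\overline C''\,Q^{-n}=\begin{pmatrix}\mu_1&\mu_2\\ \mu_3&\mu_4\end{pmatrix}$. Since $(x_1,y_1)$ solves \eqref{eq:x1y1}, $\det\overline C''=-\det M$, hence $M''$ has integer entries and $\det M''=\det M$. Exactly as \eqref{eq:approx} follows from positivity of the message entries, and using $y_1>0$, $\bar c_4>0$, one obtains
\[
\frac{x_1}{y_1}\in[a,b]\iff\mu_1\ge0\text{ and }\mu_2\ge0,\qquad\frac{\bar c_3'}{\bar c_4}\in[a,b]\iff\mu_3\ge0\text{ and }\mu_4\ge0 .
\]
Thus the hypothesis gives $\mu_3,\mu_4\ge0$, and the claim becomes: $\mu_1<0$ or $\mu_2<0$.

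I would then compute the bottom row of $M''$ in closed form. Writing $\bar c_3'=c_3-h=F_{n+1}m_3+F_nm_4-h$ and $\bar c_4=c_4+e_4=F_nm_3+F_{n-1}m_4+e_4$ and using Cassini, $F_{n+1}F_{n-1}-F_n^2=(-1)^n=-1$,
\[
\mu_3=m_3+hF_{n-1}+e_4F_n,\qquad\mu_4=m_4-hF_n-e_4F_{n+1}.
\]
Since $h=\bigl\lceil-(m_3+e_4F_n)/F_{n-1}\bigr\rceil$ (this is the $h$ appearing in the computation just before Lemma~\ref{thm:e_int}), this forces $0\le\mu_3<F_{n-1}$; moreover $e_4\neq0$, and the case analysis in the proof of Lemma~\ref{thm:e_int} shows that under the hypothesis $\bar c_3'/\bar c_4\in[a,b]$ one even has $\mu_3\ge1$.

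Next I would express the top row through $x_1$. From \eqref{eq:x1y1}, $y_1=(x_1\bar c_4+\det M)/\bar c_3'$; substituting this and the identities $\mu_3=F_n\bar c_4-F_{n-1}\bar c_3'$, $\mu_4=F_n\bar c_3'-F_{n+1}\bar c_4$ yields
\[
\mu_1=\frac{x_1\mu_3+F_n\det M}{\bar c_3'},\qquad\mu_2=\frac{x_1\mu_4-F_{n+1}\det M}{\bar c_3'} .
\]
Assume first $\det M>0$ (the other sign is symmetric, and is in fact easier since then $\mu_1$ itself may already be negative, while $\det M=0$ is degenerate and treated separately). Because $x_1>0$, $\mu_3\ge0$, $\det M>0$ and $\bar c_3'>0$, the numerator of $\mu_1$ is positive, so $\mu_1>0$ automatically (equivalently $x_1/y_1<b$ for free). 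Hence the lemma reduces to $\mu_2<0$, i.e.\ $x_1\mu_4<F_{n+1}\det M$; this is immediate if $\mu_4=0$.

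The remaining case $\mu_4\ge1$ is the heart of the argument, and here I would use that $(x_1,y_1)$ is the \emph{smallest} positive solution of \eqref{eq:x1y1} selected by the decoder: consecutive solutions differ by $(\bar c_3',\bar c_4)$, both coordinates increasing in the parameter, so minimality forces $x_1\le\bar c_3'$ or $y_1\le\bar c_4$; in the latter case $x_1\bar c_4=y_1\bar c_3'-\det M<\bar c_4\bar c_3'$, so in every case $x_1<\bar c_3'=F_{n+1}\mu_3+F_n\mu_4$. Inserting this into the target inequality $x_1\mu_4<F_{n+1}\det M$ and using $\det M=\mu_1\mu_4-\mu_2\mu_3$, $0\le\mu_3<F_{n-1}$, $\mu_4\ge1$ (together with $\mu_4<F_{n+1}$, which again follows from Lemma~\ref{thm:e_int} under the hypothesis), the statement collapses to a finite check with the Fibonacci recurrence, carried out along the same $e_4>0$ / $e_4<0$ split as in Lemma~\ref{thm:e_int}. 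This final estimate — turning ``$x_1$ is the smallest positive solution'' into a bound tight enough to beat $F_{n+1}\det M$, and disposing of the boundary of $[a,b]$ and of the degenerate case $\det M=0$ — is the step I expect to be the main obstacle; everything before it is bookkeeping with $Q^{\pm n}$ and Cassini.
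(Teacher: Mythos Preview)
Your change of variables to the decoded side is correct and illuminating: the identities $\mu_1\bar c_3'=x_1\mu_3+F_n\det M$ and $\mu_2\bar c_3'=x_1\mu_4-F_{n+1}\det M$ hold, and the two bounds on $e_4$ in \eqref{eq:e4} are precisely the conditions $\mu_3>0$ and $\mu_4>0$. The paper, however, takes a different route and never invokes minimality of the Diophantine solution. It stays on the $C$-side, splits into the cases $e_4>0$ and $e_4<0$ coming from Lemma~\ref{thm:e_int}, rewrites each of the inequalities $x_1/y_1>a$ and $x_1/y_1<b$ via the relation $x_1\bar c_4-y_1\bar c_3'=-\det M$, and then uses the endpoint bounds on $e_4$ together with the Cassini cancellations $F_nc_3-F_{n+1}c_4=m_4$ and $F_nc_4-F_{n-1}c_3=m_3$ to collapse each comparison to the bare sign of $\det M$. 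In your language the paper is asserting the pair of equivalences $\mu_2>0\Leftrightarrow\det M<0$ and $\mu_1>0\Leftrightarrow\det M>0$, argued directly and without any size constraint on $(x_1,y_1)$.

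The genuine gap in your proposal is exactly the step you flag yourself: the inequality $x_1\mu_4<F_{n+1}\det M$ is never actually derived from the minimality bound $x_1<\bar c_3'=F_{n+1}\mu_3+F_n\mu_4$. Substituting, the target becomes $(F_{n+1}\mu_3+F_n\mu_4)\mu_4<F_{n+1}(\mu_1\mu_4-\mu_2\mu_3)$, which still contains the unknowns $\mu_1,\mu_2$ and does not close up from $0<\mu_3<F_{n-1}$ and $\mu_4\ge1$ alone; the promised ``finite check with the Fibonacci recurrence'' is not carried out, and it is not clear what replaces it. Since the paper's argument sidesteps this step entirely, you should either produce the missing estimate explicitly or follow the paper's line and obtain the reverse implications (for instance $\det M>0\Rightarrow\mu_2\le0$) directly from the case data of Lemma~\ref{thm:e_int}, using both inequalities in \eqref{eq:e4} simultaneously rather than reaching for a size bound on $x_1$.
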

\begin{proof}
	Since $\bar c'_3/\bar c_4\in [a,b]$ by Lemma \ref{thm:e_int} we have two cases:
	\begin{itemize}
		\item $e_4>0$ such that $e_4F_n =q F_{n-1}+r-m_3$ with $r< F_{n-1}$ and $h=-q$. Because $(x_1,y_1)$ is a  solution of \eqref{eq:x1y1} we have 
		$$
		\cfrac{x_1}{y_1}=\cfrac{y_1(c_3+q)-\det M}{y_1(c_4+e_4)}
		$$
		We want to verify that $x_1/y_1\not\in [a,b]$:
		\begin{enumerate}
			\item[(a)] $x_1/y_1> a$ if and only if $\det M<0$. In fact, 
			$$x_1/y_1>F_{n+1}/F_n \iff F_n y_1(c_3 +q) - F_n  \det M > F_{n+1} y_1(c_4+e_4).$$
			Because $e_4$ verify \eqref{eq:e4},
			$F_{n+1} y_1(c_4+e_4)<F_{n+1}y_1c_4+F_n y_1 q + y_1 m_4$ but to the other side when $\det M>0$ we have that
			$$ F_n y_1(c_3 +q) - F_n  \det M > F_{n+1}y_1c_4+F_n y_1 q + y_1 m_4,$$
			that is
			$$
			\qquad y_1 F_n(F_{n+1}m_3 + F_n m_4) - F_n  \det M > y_1 F_{n+1}(F_{n}m_3 + F_{n-1} m_4) +m_4y_1,
			$$
			thus,  $F_n\det M >0$.  This mean that $x_1/y_1\notin[a,b]$ when $\det M>0$.
			\item[(b)] $x_1/y_1<b$ if and only if $\det M>0$. In fact, 
			$$\qquad x_1/y_1<F_{n}/F_{n+1} \iff F_{n-1} y_1(c_3 +q) - F_{n-1}\det M <F_{n} y_1(c_4+e_4).$$
			Because $e_4$ verify \eqref{eq:e4},
			$F_{n} y_1(c_4+e_4)>F_{n}y_1c_4+F_{n-1} y_1 q - y_1 m_3$ but when $\det M>0$ we have 
			$$F_{n}y_1c_4+F_{n-1} y_1 q - y_1 m_3> F_{n-1} y_1(c_3 +q) - F_{n-1}\det M$$
			in fact
			$$\qquad \qquad y_1 F_n(F_{n}m_3 + F_{n-1} m_4)> y_1 F_{n-1}(F_{n+1}m_3 + F_n m_4)  - F_{n-1}\det M,$$ 
			thus, $F_{n-1}\det M >0$. 
			This mean that $x_1/y_1\notin[a,b]$ when $\det M<0$.
		\end{enumerate}
		In similar way we can prove the following case.
		\item $e_4<0$ such that $|e_4|F_n=q F_{n-1}+r+m_3$ with $r< F_{n-1}$ and $F_{n}^2+1<F_{n+1}(m_3+r)+F_n m_4$  than $h=q+1$. Because $(x_1,y_1)$ is a  solution of \eqref{eq:x1y1} we have 
		$$
		\cfrac{x_1}{y_1}=\cfrac{y_1(c_3-(q+1))-\det M}{y_1(c_4-|e_4|)}
		$$
		We want to verify that $x_1/y_1\not\in [a,b]$:
		\begin{enumerate}
			\item[(a)] $x_1/y_1> a$ if and only if $\det M<0$. In fact, 
			$$\qquad \qquad x_1/y_1>F_{n+1}/F_n \iff F_n y_1(c_3 -(q+1)) - F_n  \det M > F_{n+1} y_1(c_4-|e_4|).$$
			Because $e_4$ verify \eqref{eq:e4},
			$F_{n+1} y_1(|e_4|-c_4)>(q+1)F_{n}y_1+ y_1 m_4-F_{n+1} y_1 c_4$ but to the other side when $\det M<0$ we have that
			$$ (q+1)F_{n}y_1+ y_1 m_4-F_{n+1} y_1 c_4> F_n y_1(c_3 -(q+1)) - F_n  \det M $$
			in fact
			$$ 
			\qquad \qquad y_1 m_4-F_{n+1} y_1(F_{n}m_3 + F_{n-1} m_4) > y_1 F_{n+1}(F_{n+1}m_3 + F_n m_4)  - F_n  \det M,
			$$ 
			thus, $F_n\det M < 0$.
			This mean that $x_1/y_1\notin[a,b]$ when $\det M>0$.
			\item[(b)] $x_1/y_1<b$ if and only if $\det M>0$. In fact, 
			$$\qquad \qquad \qquad x_1/y_1<F_{n}/F_{n-1} \iff F_{n-1} y_1(c_3 -(q+1)) - F_{n-1}  \det M < F_{n} y_1(c_4-|e_4|).$$
			Because $e_4$ verify \eqref{eq:e4},
			$F_{n} y_1(|e_4|-c_4)<(q+1)F_{n-1}y_1+ y_1 m_3-F_{n} y_1 c_4$ but when $\det M>0$ we have 
			$$\qquad (q+1)F_{n-1}y_1+ y_1 m_3-F_{n} y_1 c_4<F_{n-1} y_1((q+1)-c_3) + F_{n-1}  \det M$$
			in fact
			$$
			\qquad \qquad y_1 m_3- F_n y_1(F_{n}m_3 + F_{n-1} m_4)< -F_{n-1}y_1 (F_{n+1}m_3 + F_n m_4)  + F_{n-1}  \det M,
			$$ 
			$F_{n-1}\det M >0$. This mean that $x_1/y_1\notin[a,b]$ when $\det M<0$.
		\end{enumerate}
	\end{itemize}
	\end{proof}

\paragraph{Case 2 - Correct row, wrong element.}

This case is similar to the previous one.
Suppose to receive $\overline C$ as \eqref{eq:matrix3er}
but we guess to have received 
\begin{equation}
\overline C' = \begin{pmatrix}  \bar c_1 & c_2 \cr \bar c_3 &  \bar c_4  \end{pmatrix}
\end{equation}
We start to correct the first row trying to find the  error $e_1$.\\
Suppose to have $\cfrac{c_1}{\bar c_2}  > b$ so we compute
$$e'_1 = \lceil c_1 - \bar c_2 \cdot b \rceil = \lceil - \cfrac{m_1}{F_{n-1}} - e_2 \cdot b \rceil = h,$$
where $h\in \ZZ$, and 
$$\bar c_1 = c_1 - e'_1 = c_1 - h.$$
We are exactly in the previous case so we can proceed as before.


\section{Computing the redundancy of Fibonacci codes} \label{sec:redundancy}
In this section, we give an estimation of the redundancy inserted by the coding method. In \cite{StakhovBook}, the author only discussed the redundancy due to the transmission of $\det M$ together with the encoded message, but nothing is said about the redundancy inserted by the matrix multiplication \eqref{eq:coding}.
\begin{theorem}
	Let us consider the message space ${\mathcal M} = \{0,1\}^k$, with $k=4h$ for some integer $h$, 
	so that we can split a message in 4 block of equal bit size.
	For a fixed $n$, then a codeword of a Fibonacci code with generator matrix $Q^n$ lies in the space ${\mathcal C} = \{0,1\}^l$, with 
	$$l = 
	\lfloor 
	(4n+2)\log_2 \varphi - 2\log_2 5 + 3k/2 + 5
	\rfloor \,.$$
\end{theorem}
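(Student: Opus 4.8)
The plan is to model what is actually put on the channel — the four codeword entries $c_1,c_2,c_3,c_4$ together with the checking element $\det M$ — as a tuple of integers, and to count how many bits are needed to store it, the bound $l$ being the sum over the five components.

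First I would unpack the message side. Since $k=4h$, split a message in $\mathcal M$ into four blocks of $k/4$ bits; then each entry $m_i$ of $M$ is an integer with $m_i\le 2^{k/4}$, so that $m_1+m_2\le 2^{\,k/4+1}$, $m_3+m_4\le 2^{\,k/4+1}$ and $|\det M|=|m_1m_4-m_2m_3|<2^{k/2}$. On the codeword side, the formulas that follow \eqref{eq:coding}, together with $F_{n-1}<F_n<F_{n+1}$, give
$$c_1=F_{n+1}m_1+F_nm_2\le F_{n+1}(m_1+m_2)\le F_{n+1}\cdot 2^{\,k/4+1},$$
and likewise $c_3\le F_{n+1}\cdot 2^{\,k/4+1}$, $c_2=F_nm_1+F_{n-1}m_2\le F_n(m_1+m_2)\le F_n\cdot 2^{\,k/4+1}$ and $c_4\le F_n\cdot 2^{\,k/4+1}$. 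It is precisely this uniform use of $F_{n+1}$ and $F_n$ (rather than the sharper-looking $F_{n+2}$ and $F_{n+1}$) that will produce the clean coefficient $4n+2$.

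Next I pass from magnitudes to bit lengths. A non-negative integer below $N$ is stored in $\lceil\log_2 N\rceil$ bits, and Binet's formula $F_m=(\varphi^m-\psi^m)/\sqrt5$, with $\psi=(1-\sqrt5)/2$ the second root of $x^2-x-1$, gives $|F_m-\varphi^m/\sqrt5|<1/2$ and hence $\log_2 F_m\le m\log_2\varphi-\tfrac12\log_2 5+\varepsilon_m$ with $\varepsilon_m\to 0$. Summing: $c_1$ and $c_3$ cost at most $\log_2 F_{n+1}+k/4+O(1)\le(n+1)\log_2\varphi-\tfrac12\log_2 5+k/4+O(1)$ bits each, $c_2$ and $c_4$ at most $\log_2 F_n+k/4+O(1)\le n\log_2\varphi-\tfrac12\log_2 5+k/4+O(1)$ bits each, and $\det M$ (with a sign bit) at most $k/2+O(1)$ bits. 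Adding the four codeword entries yields $2(n+1)\log_2\varphi+2n\log_2\varphi-2\log_2 5+k+O(1)=(4n+2)\log_2\varphi-2\log_2 5+k+O(1)$, and including $\det M$ the total is $(4n+2)\log_2\varphi-2\log_2 5+3k/2+O(1)$; pinning the $O(1)$ exactly — the $\lceil\cdot\rceil$ rounding of each of the five components, the residual Binet errors $\varepsilon_n,\varepsilon_{n+1}$, and the sign bit of $\det M$ — fixes the constant at $5$, so that every codeword of the Fibonacci code with generator matrix $Q^n$ lies in $\{0,1\}^l$ with $l=\lfloor(4n+2)\log_2\varphi-2\log_2 5+3k/2+5\rfloor$.

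The Fibonacci and golden-ratio estimates above are routine; the real work, and the step I expect to be the main obstacle, is the last one: keeping the additive constant equal to exactly $5$ rather than a nearby integer. This forces care about the exact convention for the number of bits of a component (ceiling of a logarithm, and the handling of the sign of $\det M$) and about how tightly $F_n$ and $F_{n+1}$ are squeezed around $\varphi^n/\sqrt5$ and $\varphi^{n+1}/\sqrt5$; the bound is essentially sharp, attained by taking all $m_i$ close to $2^{k/4}$, so this constant cannot be improved by much.
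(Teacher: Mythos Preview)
Your approach is essentially the same as the paper's: Binet's formula to approximate $\log_2 F_m\approx m\log_2\varphi-\tfrac12\log_2 5$, the bounds $c_1,c_3\lesssim F_{n+1}\cdot 2^{k/4+1}$ and $c_2,c_4\lesssim F_n\cdot 2^{k/4+1}$ (which is exactly what produces the coefficient $4n+2=2(n+1)+2n$), and then the determinant handled separately. The paper's own argument is in fact terser than yours --- it simply asserts that the four entries of $C$ occupy $\lfloor(4n+2)\log_2\varphi-2\log_2 5+k+4\rfloor$ bits and that $\det M$ needs $2h+1=k/2+1$ bits (the extra $1$ being the sign), so the split of the constant $5$ into $4+1$ that you identify is precisely the one the paper uses.
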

\begin{proof}
	Let $\varphi = \frac{\sqrt{5}+1}{2}$ be the Golden Ratio.
	By Binet's formula \cite[Section 1.2.8]{knuth1968art}
	\footnote{In the reference this formula is attributed to De Moivre.},
	we have that $F_n = \frac{\varphi^n - (-\varphi)^{-n}}{\sqrt{5}} \approx \round{\frac{\varphi^n}{\sqrt{5}}}$, 
	where the operator $\round{.}$ approximates to the nearest integer.
	We can then deduce that
	$
	\log_2 F_n \approx n \log_2 \varphi - \frac{\log_2 5}{2}.
	$
	
	Let 
	$
	M \in {\mathcal M}$, then $C = M \times Q^n = 
	\begin{pmatrix}
	c_1 & 
	c_2 \cr 
	c_3 & 
	c_4  
	\end{pmatrix}
	=
	\begin{pmatrix}
	F_{n+1} m_1 + F_n m_2 & 
	F_n m_1 + F_{n-1} m_2 \cr 
	F_{n+1} m_3 + F_n m_4 & 
	F_n m_3 + F_{n-1} m_4  
	\end{pmatrix}\,.
	$
	Then the number of bits of $C$ is  
	$
	\lfloor 
	(4n+2)\log_2 \varphi - 2\log_2 5 + k + 4
	\rfloor
	$.
	
	To conclude, we need to consider that the determinant of $M$ has to be sent in order to decode, and since the determinant is maximal when $m_1 = m_4 = 2^h - 1$ and $m_2 = m_3 = 0$, then no more than $2h+1$ bits are needed to represent the determinant of $M$, where one bit is needed for the sign.
	\end{proof}
In practice, the value of the Golden Ratio $\varphi = \frac{\sqrt{5}+1}{2}$ should be approximated in such a way that the number of binary digit to represent it is greater than the number of binary digits of $n$.
In practice, the number of bits needed to represent the matrix $C$ is 
$2.8 n + k $,
since $$\log_2 \varphi \approx 0.694241913630617301738790266899$$
and  $$\frac{\log_2 5}{2} \approx 1.16096404744368117393515971474.$$
This means that $l$ can be approximated by $2.8n + 1.5k$. 
We can conclude with the following
\begin{corollary}
	The Fibonacci code with generator matrix $Q^n$ and encoding messages of length $k=4h$, for an integer $h$, has redundancy $2.8n + 0.5k$.
\end{corollary}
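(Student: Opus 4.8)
The plan is to identify the redundancy with the additive bit overhead $l-k$ of the code, where $l$ is the codeword length supplied by the preceding Theorem, and then to simplify $l-k$ using the numerical values of $\log_2\varphi$ and $\tfrac12\log_2 5$ recorded just above.

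First I would invoke the Theorem to write
$$l-k=\left\lfloor (4n+2)\log_2\varphi-2\log_2 5+\tfrac{3k}{2}+5\right\rfloor-k,$$
and then drop the outer floor, at the cost of an error strictly less than one bit, to get
$$l-k=(4n+2)\log_2\varphi-2\log_2 5+\tfrac{k}{2}+5+O(1).$$
Next I would substitute $\log_2\varphi\approx 0.6942$, so that the coefficient of $n$ becomes $4\log_2\varphi\approx 2.777$, which rounds to $2.8$; the coefficient of $k$ is exactly $\tfrac12$; and the remaining $n$-free terms, namely $2\log_2\varphi-2\log_2 5+5$ together with the bounded contribution of the floor, form a constant that is absorbed. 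This yields redundancy $=2.8n+0.5k$, consistent with the approximation $l\approx 2.8n+1.5k$ obtained in the discussion preceding the statement.

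Since this is essentially a one-line substitution into the Theorem, there is no genuine obstacle; the only point that needs to be made explicit is the convention. I would state clearly that \emph{redundancy} here means the number of extra bits $l-k$ transmitted beyond the $k$ message bits — an additive overhead, not a rate or a percentage, in contrast to the ambiguous figures quoted in the introduction — that the additive $O(1)$ term and the floor are being suppressed, and that the coefficient $2.8$ is simply the rounding of $4\log_2\varphi$. With these conventions fixed, the claim is immediate from the Theorem and the displayed numerical approximations.
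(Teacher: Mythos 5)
Your proposal matches the paper's own (implicit) argument: the corollary is read off as the additive overhead $l-k$ from the theorem's formula, with $4\log_2\varphi\approx 2.777$ rounded to $2.8$, the $k$-coefficient dropping from $3/2$ to $1/2$, and the bounded constant (and floor) suppressed, exactly as in the discussion preceding the statement where $l\approx 2.8n+1.5k$. Your explicit remark that ``redundancy'' here means extra bits $l-k$ rather than a rate is a welcome clarification, but the route is the same.
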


\section{Conclusions and future works} \label{sec:conc}

We have provided a more detailed discussion on how to detect and correct double errors in the special case of $2 \times 2$ generator matrices in the error correcting codes introduced by Stakhov, i.e. 
1-Fibonacci error correcting codes.
A similar analysis should be provided for $p \times p$ generator matrices, and for a larger amount of errors.
We also provided an explicit formula to compute the redundancy of 1-Fibonacci error correcting codes.
In the future, also a metric for this codes should be introduced, 
in order to define the concept of codewords weight and distance.
Due to the particular kind of errors that the 1-Fibonacci error correcting codes can correct,  it is hard to see in which real scenario they could be applied as an error correction tool. 
On the other hand, the compact representation of their generator matrix may be useful to build one way functions for cryptography,
where often standard codes offer very large keys, encryptions or signatures. 
For this reason, more research on the actual complexity of the decoding algorithm should be conducted.

\bibliographystyle{amsalpha}
\bibliography{biblio-fibo}

\end{document}